\newtheorem{theorem}{Theorem}[section]
\newtheorem{lemma}[theorem]{Lemma}
\newtheorem{proposition}[theorem]{Proposition}
\newtheorem{fact}[theorem]{Fact}
\theoremstyle{definition}
\newtheorem{example}[theorem]{Example}
\theoremstyle{remark}
\newtheorem{remark}[theorem]{Remark}
\numberwithin{equation}{section}
\renewcommand{\dim}{\mathop{\mathrm{dim}}}
\newcommand{\R}{\mathbb{R}}
\newcommand{\X}{\mathrm{X}}
\renewcommand{\H}{\mathrm{H}}
\newcommand{\B}{\mathbf{B}}
\newcommand{\U}{\mathbf{U}}
\renewcommand{\S}{\mathbf{S}}
\begin{document}

\title[Convexity properties of quasihyperbolic balls]{Convexity properties of quasihyperbolic balls on Banach spaces}

\begin{abstract}
We study the convexity and starlikeness of  metric balls on Banach spaces when the metric is the quasihyperbolic metric or the distance ratio metric. In particular, problems related to these metrics on convex domains, and on punctured Banach spaces, are considered. 
\end{abstract}

\author{Antti Rasila}
\address{Antti Rasila, Aalto University, Institute of Mathematics, P.O. Box 11100, FI-00076 Aalto, Finland} 
\email{antti.rasila@iki.fi}

\author{Jarno Talponen}
\address{Jarno Talponen, Aalto University, Institute of Mathematics, P.O. Box 11100, FI-00076 Aalto, Finland} 
\email{talponen@cc.hut.fi}
\date{\today}
\subjclass{Primary  30C65; Secondary  46T05}
\maketitle

\section{Introduction}

In this paper we deal with Banach manifolds, which are obtained by defining a conformal metric on non-trivial subdomains of a given Banach space. An example of such metric is the quasihyperbolic metric on a domain of a Banach space. 
It is obtained from the norm-induced metric by adding a weight, which depends only on distance to the boundary of the domain. The quasihyperbolic metric of domains in ${\R}^n$ was first studied by F.W.~Gehring and his students B.~Palka \cite{GehringPalka76} and 
B.~Osgood \cite{GehringOsgood79} in 1970's. It has turned out to be a useful tool in, e.g., the theory of quasiconformal mappings. In particular, quasihyperbolic metric plays a crucial role in the theory of quasiconformal mappings in Banach spaces, developed by 
J.~V\"ais\"al\"a in the series of articles \cite{Vaisala90,Vaisala91,Vaisala92,Vaisala98,Vaisala99}. This is due to the fact that many of the tools used in the Euclidean space are not available in the infinite-dimensional setting (see \cite{Vaisala99}). 

We mainly study the question of how the geometry of the Banach space norm translates into the properties of the quasihyperbolic metric.  In particular, we consider convexity and starlikeness of quasihyperbolic balls in domains of Banach spaces, for example 
the punctured space $\Omega = \X \setminus \{0\}$.  This problem was posed in  ${\R}^n$ by M.~Vuorinen \cite{Vuorinen07}, and studied by R.~Kl\'en in \cite{Klen08a, Klen08b} and J. V\"{a}is\"{a}l\"{a} in \cite{Vaisala08}. 
Some of the techniques used there are specific to ${\R}^n$. In the general Banach space setting a very different approach is required. 
 
Our main results are the following. In Theorem \ref{thm: sl} we show that each ball in the distance ratio metric (the $j$-metric) defined on a proper subdomain of a Banach space is starlike for radii $r\le \log 2$, partly generalizing 
a result of Kl\'en \cite[Theorem 3.1]{Klen08b}. In Theorem \ref{thm: cd}, which improves a result of O.~Martio and J.~V\"ais\"al\"a \cite[2.13]{MartioVaisala11}, we show that the $j$-balls and the quasihyperbolic balls defined on a convex domain of 
a Banach space are convex. Then, in Theorem \ref{thm_starlike_starlike}, we show that all $j$-balls and quasihyperbolic balls are starlike if the domain is starlike with respect to the center of the ball.
We also give a counterexample, which settles a question posed by O.~Martio and J.~V\"ais\"al\"a \cite[2.14]{MartioVaisala11} concerning quasihyperbolic geodesics on uniformly convex Banach spaces. Related problems involving quasihyperbolic geodesics have been 
studied in ${\R}^n$ by G.J.~Martin \cite{Martin85} in the 1980's, and several authors thereafter. Finally, we discuss the issue of convexity of quasihyperbolic balls on punctured Banach spaces.

\section{Preliminaries}

First, let us recall a few basic results and definitions. Unless otherwise stated, we will assume that $\X$ is a Banach space with $\dim \X\ge 2$, and that $\Omega \subsetneq \X$ is a domain. 
Open and closed balls in $\X$ are
\[
\U(x,r) := \U_{\|\cdot\|}(x,r) := \{ y\in  \X : \|x-y\| <r\}
\]
\[
\B(x,r) :=\B_{\|\cdot\|}(x,r) := \{ y\in  \X : \|x-y\| \le r\},\text{ and } \S(x,r):=\partial \B(x,r). 
\]
 A set $\Omega\subset \X$ is called \emph{convex} if the line segment
\[
[x,y] := \{ tx + (1-t)y : t\in [0,1]\} \subset \Omega\textrm{ for all }x,y\in\Omega,
\] 
and \emph{starlike} with respect to $x_0\in \Omega$ if 
\[
[x_0,y] := \{ tx_0 + (1-t)y : t\in [0,1]\} \subset \Omega\textrm{ for all }y\in\Omega.
\] 
Observe that the use of notation $[x,y]$ here is different from some texts dealing with Banach spaces. Obviously a set $\Omega$ is convex if and only if it is starlike with respect to every point $x_0\in \Omega$.

\subsection{Paths and line integrals}

In what follows a {\it path} in a metric space $(\X,d)$ is a continuous mapping $\gamma$
of the unit interval $I=[0,1]$ into $\X$. If $J=[a,b]\subset I$ is a closed
subinterval, then the {\it length} of a path $\gamma\colon I\to \X$ restricted to $J$ is
\begin{equation}
\label{lendef}
\ell_{d}(\gamma,a,b) = \sup \sum_{i=1}^n d\big(\gamma(t_i),\gamma(t_{i+1})\big),
\end{equation}
where the supremum is taken over all sequences $a=t_1\leq t_2 \leq
\ldots \leq t_n\leq t_{n+1}=b$. The (total) length of $\gamma$ is $\ell_{d}(\gamma)=\ell_{d}(\gamma,0,1)$. A path $\gamma$ is {\it rectifiable} if its length is finite.

Given a rectifiable path $\gamma\colon I\to \X$ such that $\ell_{d}(\gamma,0,s)$ is absolutely continuous with respect to $s$, we denote the \emph{length element} of $\gamma$ by 
\begin{equation}
\label{lengthelement}
\| D\gamma\| = \|D\gamma(s)\| = \frac{d}{ds} \ell(\gamma,0,s)\quad \mathrm{for\ a.e.}\ s\in I.
\end{equation}
Recall that an increasing absolutely continuous function is a.e. differentiable and can be recovered by integrating its derivative. Thus
\[
\ell (\gamma,0,t) = \int_0^t \|D\gamma\|\ ds=\int_{0}^{t}\|d\gamma\|,
\]
where the last integral can be interpreted as the Stieltjes integral with respect to integrator $\ell_{d}(\gamma,0,t)$, or equivalently, the Lebesgue integral, under the formal convention that
\begin{equation}\label{eq: deriv}
\| d\gamma\|=\|D\gamma\|\ ds,
\end{equation}
bearing \eqref{lengthelement} in mind. In this paper both interpretations for the integrals are useful. Note that for instance the parameterization with respect to the arc length is absolutely continuous. 
Obviously, any rectifiable path in a Banach space can be approximated uniformly by an absolutely continuous path, e.g., a broken line. If $\gamma$ is a path in a Banach space $\X$, we will denote its G\^ateaux derivative by 
\begin{equation}\label{eq: gateaux} 
D\gamma(t):=\lim_{h\to 0}\frac{\gamma(t+h)-\gamma(t)}{h},
\end{equation}
provided that it exists. Observe that if $\gamma$ is G\^ateaux differentiable at $t$, then $\| D\gamma(t)\|=\|(D\gamma(t))\|$. Here the left-hand side is as in \eqref{lengthelement}, whereas the right-hand side is the norm 
of a vector given by \eqref{eq: gateaux}. The latter is required in order to incorporate geometric considerations in path-length estimates.
We note that differentiation of Banach space valued functions can also be studied by means of the Bochner integral. This approach is effective especially in Banach spaces with the so-called Radon-Nikod\'ym property (RNP), 
which means that any rectifiable, absolutely continuous path starting from the origin can be recovered by Bochner integrating its G\^ateaux derivative. For basic information about these concepts we refer to \cite{Diestel} and \cite{DiestelUhl77}, 
see also \cite{Banach_space_book}. 

\subsection{Quasihyperbolic metric}

Let $\X$ be a Banach space with $\dim \X\ge 2$, and suppose that $\Omega \subsetneq \X$ is a domain.  For $x\in \Omega$, let $d(x)$ denote the distance $d(x,\partial \Omega)$. We define the \emph{quasihyperbolic length} of $\gamma$ by 
\[
\ell_k(\gamma):=\int_{I} \frac{\|d \gamma\|}{d(\gamma(t))}
\]
then the \emph{quasihyperbolic distance} of points $x,y\in \Omega$ is the number
\[
k(x,y):=k_\Omega(x,y):=\inf_\gamma \ell_k(\gamma)
\]
where the infimum is taken over all rectifiable arcs $\gamma$ joining $x$ and $y$ in $\Omega$.
Quasihyperbolic balls are
\begin{eqnarray*}
\U_k(x,r) &:=& \{ y\in  \Omega : k_\Omega(x,y) <r\},\\
\B_k(x,r) &:=& \{ y\in  \Omega : k_\Omega(x,y) \le r\}. 
\end{eqnarray*}

It is well known \cite[Lemma 1]{GehringOsgood79} that in the finite-dimensional case there is a quasihyperbolic geodesic between any two points. By \cite[Theorem 2.5]{Vaisala05}, for a reflexive Banach space $\X$ and a convex subdomain $\Omega\subsetneq \X$ there always exists a quasihyperbolic geodesic connecting $x,y\in \Omega$. One of the peculiarities of this topic is that it is not known whether this holds for general Banach spaces (see also \cite[Section 6]{Vaisala05}). It is easy to check that multiplication by a constant $C\neq 0$ is a quasihyperbolic isometry on $\Omega=\X\setminus \{0\}$. 

\subsection{Distance-ratio metric}

The quasihyperbolic distance is often difficult to compute in practice. For this reason, we consider another related quantity, the distance-ratio metric. This metric was originally introduced by Gehring and Palka in \cite{GehringPalka76}. 
We use a version that is due to Vuorinen \cite{Vuorinen85}. Let $\X$ be a Banach space with $\dim \X\ge 2$, and suppose that $\Omega \subsetneq \X$ is a domain. Write
\[
a \lor b := \max\{a,b\},\qquad a \land b := \min\{a,b\}.
\]
The {\it distance-ratio metric}, or {\it $j$-metric}, on $\Omega$ is defined by
\begin{equation}
\label{jdef}
j(x,y):=j_\Omega(x,y) := \log\bigg(1+\frac{\|x-y\|}{d(x)\land d(y)}\bigg), \qquad x,y\in \Omega.
\end{equation}
Again, the balls with respect to the $j$-metric are
\begin{eqnarray*}
\U_j(x,r) &:=& \{ y\in  \Omega : j_\Omega(x,y) <r\},\\
\B_j(x,r) &:=& \{ y\in  \Omega : j_\Omega(x,y) \le r\}. 
\end{eqnarray*}
It is well known that the norm metric, the quasihyperbolic metric and the distance-ratio metric define the same topology on $\Omega$. In fact, the $j$-metric is an inner metric of the quasihyperbolic metric.

\subsection{Geometric control of Banach spaces}

Next, we will recall for convenience two essential moduli related to the geometry of Banach spaces. The \emph{modulus of convexity} $\delta_{\X}(\epsilon),\ 0<\epsilon \leq 2,$ is defined by
\[\delta_{\X}(\epsilon):=\inf\{1-\|x+y\|/2:\ x,y\in \X,\ \|x\|=\|y\|=1,\ \|x-y\|=\epsilon\},\]
and the \emph{modulus of smoothness} $\rho_{\X}(\tau),\ t>0$ is defined by 
\[\rho_{\X}(\tau):=\sup\{(\|x+y\|+\|x-y\|)/2 -1,\ x,y\in \X,\ \|x\|=1,\ \|y\|=\tau\}.\]
The Banach space $\X$ is called \emph{uniformly convex} if $\delta_{\X}(\epsilon)>0$ for all $\epsilon>0$, and \emph{uniformly smooth}  if 
\[
\lim_{\tau\to 0^{+}}\frac{\rho_{\X}(\tau)}{\tau}=0.
\]
Moreover, a space $\X$ is uniformly convex (resp. uniformly smooth) of power type $p\in [1,\infty)$ if $\delta_{\X}(\epsilon)\geq K\epsilon^{p}$ (resp. $\rho_{\X}(\tau)\leq K\tau^p$) for some $K>0$. Note that the modulus $\delta_X$ measures the convexity of the unit ball. Uniform convexity of a general convex set can be defined analogously.


\section{Starlikeness of $j$-balls}

Next, we show that $j$-metric balls are starlike for radii $r\le \log 2$.

\begin{theorem}\label{thm: sl}
Let $\X$ be a Banach space, $\Omega\subsetneq\X$ a domain, and let $j$ be as in (\ref{jdef}). Then each $j$-ball  $\B_{j}(x_{0},r),\ x_{0}\in \Omega,$ is starlike for radii $r\leq \log 2$.
\end{theorem}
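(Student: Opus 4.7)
The plan is to show that for every $y \in \B_j(x_0, r)$ with $r \le \log 2$ and every $z \in [x_0, y]$, one has $j(x_0, z) \le j(x_0, y) \le r$. This is precisely the starlikeness condition, so establishing a monotonicity along the segment is all I need.

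The first step will be to unwind $j(x_0,y) \le \log 2$: this inequality is equivalent to $\|x_0 - y\| \le D$, where I write $D := d(x_0) \land d(y)$. In particular $\|x_0 - y\| \le d(x_0)$, so the segment $[x_0, y]$ lies in $\U(x_0, d(x_0)) \subset \Omega$. This is exactly where the hypothesis $r \le \log 2$ gets used; without it the segment need not even lie in $\Omega$.

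For a general point $z = (1-s)x_0 + sy$, $s \in [0,1]$, I have $\|x_0 - z\| = s\|x_0 - y\|$ and $\|y - z\| = (1-s)\|x_0 - y\|$. The critical estimate will be the lower bound on $d(z)$ coming from the $1$-Lipschitz property of the distance-to-boundary function:
\[
d(z) \;\ge\; d(y) - \|y - z\| \;=\; d(y) - (1-s)\|x_0 - y\| \;\ge\; D - (1-s)D \;=\; sD.
\]
Since also $d(x_0) \ge D \ge sD$, this yields $d(x_0) \land d(z) \ge sD$, and therefore
\[
\frac{\|x_0 - z\|}{d(x_0) \land d(z)} \;\le\; \frac{s\|x_0 - y\|}{sD} \;=\; \frac{\|x_0 - y\|}{d(x_0) \land d(y)}.
\]
Applying $\log(1+\cdot)$ to both sides gives $j(x_0, z) \le j(x_0, y) \le r$, as required.

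I do not expect a genuine obstacle here: the argument is a linear computation and the role of the threshold $r = \log 2$ is transparent. It is exactly the cutoff that yields $\|x_0 - y\| \le D$, which in turn produces the factor $sD$ in the lower bound on $d(z)$ that cancels the factor $s$ in $\|x_0 - z\|$. For any $r$ above this threshold the same Lipschitz bound on $d(z)$ leaves behind an extra term not controlled by $j(x_0,y)$ alone, consistent with the expectation that the constant $\log 2$ is sharp.
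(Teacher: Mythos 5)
Your proof is correct and follows essentially the same route as the paper's: both unwind $j(x_0,y)\le\log 2$ into $\|x_0-y\|\le d(x_0)\wedge d(y)$ and then use the $1$-Lipschitz property of $d$ to bound $d$ from below along the segment. Your version is in fact slightly cleaner, since you establish the monotonicity $j(x_0,z)\le j(x_0,y)$ for $z\in[x_0,y]$, which covers every radius $r\le\log 2$ in one stroke, whereas the paper's displayed chain only concludes the bound $\log 2$ itself.
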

\begin{proof}
Let $x_{0},y\in \Omega$ such that $j(x_{0},y)\leq \log 2$. This is to say that 
\[
\frac{\|x_{0}-y\|}{d(x_{0})\wedge d(x)}\leq 1.
\]
By using simple calculations and the triangle inequality we get
\begin{eqnarray*}
j(x_{0},ty+(1-t)x_{0})&=&\log\left(1+\frac{\|x_{0}-(ty+(1-t)x_{0})\|}{d(x_{0})\wedge d(ty+(1-t)x_{0})}\right)\\
&\leq& \log\left(1+\frac{(1-t)\|x_{0}-y\|}{d(x_{0})\wedge (d(y)-t\|x_{0}-y\|)}\right)\leq \log 2,\\
\end{eqnarray*}
where we applied the fact $d(x_{0}), d(y)\geq \|x_{0}-y\|$ in the last inequality.
\end{proof}

\begin{proposition}\label{prop: intersection}
Let $\X$ be a Banach space and $\Omega\subset \X$ a domain with $\partial \Omega\neq \emptyset$. 
Then $\B_{j_{\Omega}}(x,r)=\bigcap_{z\in \X\setminus \Omega}\B_{j_{\X\setminus \{z\}}}(x,r)$. Moreover, if $\X$ is reflexive and 
$\Omega$ is weakly open, then 
\[
\U_{j_{\Omega}}(x,r)=\bigcap_{z\in \X\setminus \Omega}\U_{j_{\X\setminus \{z\}}}(x,r).
\]
\end{proposition}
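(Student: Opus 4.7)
The plan is to translate both sides into inequalities involving norm distances, using the observation that for the punctured space $\X\setminus\{z\}$ the distance function simplifies to $d_{\X\setminus\{z\}}(w)=\|w-z\|$. In particular, writing $c:=\|x-y\|/(e^{r}-1)$, we have
\[
j_{\X\setminus\{z\}}(x,y)\le r \iff \|x-z\|\wedge\|y-z\|\ge c, \qquad j_{\Omega}(x,y)\le r \iff d(x)\wedge d(y)\ge c.
\]
Since $\Omega$ is open, $d(w)=\inf_{z\in\X\setminus\Omega}\|w-z\|$ for every $w\in\Omega$. The first equality of the proposition then reduces to the elementary identity
\[
\inf_{z\in\X\setminus\Omega}\bigl(\|x-z\|\wedge\|y-z\|\bigr)=d(x)\wedge d(y),
\]
which follows at once from the fact that $\|x-z\|\wedge\|y-z\|\ge c$ for every $z\in\X\setminus\Omega$ iff both $\|x-z\|\ge c$ and $\|y-z\|\ge c$ for every such $z$, iff $d(x)\ge c$ and $d(y)\ge c$.

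For the open-ball statement, the inclusion $\subset$ is immediate from the same monotonicity: $d(x)\le\|x-z\|$ and $d(y)\le\|y-z\|$ give $j_{\X\setminus\{z\}}(x,y)\le j_{\Omega}(x,y)<r$ for every $z\in\X\setminus\Omega$. The nontrivial direction is $\supset$, where the strict inequality prevents one from quoting the closed-ball identity directly; this is the step requiring the hypotheses of reflexivity and weak openness of $\Omega$.

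The plan for $\supset$ is a contrapositive compactness argument. Suppose $j_{\Omega}(x,y)\ge r$, equivalently $d(x)\wedge d(y)\le c$, and assume without loss of generality $d(x)\le c$. Pick a minimizing sequence $(z_{n})\subset\X\setminus\Omega$ with $\|x-z_{n}\|\to d(x)$. This sequence is norm-bounded, hence by reflexivity of $\X$ (combined with the Eberlein--\v{S}mulian theorem) admits a weakly convergent subsequence $z_{n_{k}}\rightharpoonup z^{\ast}$. Weak openness of $\Omega$ makes $\X\setminus\Omega$ weakly closed, so $z^{\ast}\in\X\setminus\Omega$, and weak lower semicontinuity of the norm yields $\|x-z^{\ast}\|\le\liminf_{k}\|x-z_{n_{k}}\|=d(x)\le c$. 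Consequently $\|x-z^{\ast}\|\wedge\|y-z^{\ast}\|\le c$, so $j_{\X\setminus\{z^{\ast}\}}(x,y)\ge r$, contradicting $y\in\bigcap_{z\in\X\setminus\Omega}\U_{j_{\X\setminus\{z\}}}(x,r)$.

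The main obstacle is exactly this last reverse inclusion: transferring a family of strict inequalities indexed by $z\in\X\setminus\Omega$ into a strict inequality for the infimum $d(x)$ requires attainment (or near-attainment) of the distance by a point still lying in $\X\setminus\Omega$. The hypotheses of reflexivity plus weak openness are tailored precisely to furnish such a point via the standard weak-compactness/weak-lsc combination; without them one can only conclude a non-strict inequality for the infimum, which would be too weak.
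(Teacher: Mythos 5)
Your proof is correct and follows essentially the same route as the paper: the closed-ball identity reduces to $\inf_{z\in\X\setminus\Omega}(\|x-z\|\wedge\|y-z\|)=d(x)\wedge d(y)$, and the open-ball case hinges on the distance to the weakly closed set $\X\setminus\Omega$ being attained in a reflexive space. The only cosmetic difference is that you realize the attainment via a minimizing sequence, Eberlein--\v{S}mulian and weak lower semicontinuity of the norm, whereas the paper intersects the nested weakly compact sets $\B_{\|\cdot\|}(v,s_0+\epsilon)\cap(\X\setminus\Omega)$; these are interchangeable implementations of the same idea.
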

\begin{proof}
Denote by $C$ the norm closed set $\X\setminus \Omega$. First, note that $\X\setminus C \subset \X\setminus \{z\}$ and that $j_{\X\setminus \{z\}}(x,y)\leq j_{\Omega}(x,y)$ for each $z\in C$ and $x,y\in \Omega$.
Thus  $\B_{j_{\Omega}}(x,r)\subset \bigcap_{z\in C}\B_{j_{\X\setminus \{z\}}}(x,r)$ and $\U_{j_{\Omega}}(x,r)\subset \bigcap_{z\in C}\U_{j_{\X\setminus \{z\}}}(x,r)$.
Pick $y\in \Omega$ such that 
\[
j(x,y)=\log \bigg(1+\frac{\|x-y\|}{d(x)\wedge d(y)}\bigg)>r.
\]
Then there is $z\in C$ such that 
\[
\log \bigg(1+\frac{\|x-y\|}{\|x-z\|\wedge \|y-z\|}\bigg)>r.
\] 
This means that $y\notin \bigcap_{z\in C}\B_{j_{\X\setminus \{z\}}}(x,r)$, and so we have the first part of the statement.

Now, assume that $\X$ is reflexive and $\Omega$ is weakly open. Pick $y\in \Omega$ with $j(x,y)=r_{0}\geq r$. Let $v\in \{x,y\}$ and $s_{0}\in \R$ be such that 
\[
r_{0}=\log \bigg(1+\frac{\|x-y\|}{d(v)}\bigg)=\log \bigg(1+\frac{\|x-y\|}{s_{0}}\bigg).
\] 
Note that $C$ is weakly closed and thus by James' well-known characterization of reflexivity of Banach spaces (see e.g. \cite{Banach_space_book}), we get that $\B_{\|\cdot\|}(v,s_{0}+1)\cap C$ is weakly compact. 
Thus
\[
\bigcap_{\epsilon>0} \B_{\|\cdot\|}(v,s_{0}+\epsilon)\cap C\neq \emptyset,
\]
so let us select a point $z$ from this set. Note that $\|v-z\|=s_{0}$, since $d(v)=s_{0}$.
This means that
\[
j_{\X\setminus \{z\}}(x,y)\geq\log \bigg(1+\frac{\|x-y\|}{\|v-z\|}\bigg)=r_{0}\geq r.
\]
Consequently, 
$\U_{j_{\Omega}}(x,r)\subset \bigcap_{z\in C}\U_{j_{\X\setminus \{z\}}}(x,r)$.
\end{proof}




\begin{remark}\label{remark: unif_conformal}
The quasihyperbolic metric on $\X\setminus \{0\}$ is conformal in the following sense:
for each $C>1$ there is $r>0$ such that 
\[
C^{-1}k(x,y)\leq \frac{\|x-y\|}{\|x\|}\leq Ck(x,y)
\]
for $k(x,y)<r$. The same is true for the distance ratio metric. Note that we did not assume anything about the geometry of $\X$. The proof follows the arguments in \cite[p. 35]{Vuorinen88}, and is left to the reader. 
\end{remark}

\begin{remark}\label{remark_Hilbert}
Kl\'{e}n's main results in \cite{Klen08a} and \cite{Klen08b} involving $\R^{n}$ can be adapted to general (finite-dimensional, separable, non-separable, real or complex) Hilbert spaces $\H$.
This is due to the fact that the core of the arguments is, roughly speaking, based on calculations in $\R^{2}$ and then these observations extend to $\R^{n}$ by elegant reasoning. 
Essentially the same extension carries further to Hilbert spaces.
\end{remark}

\section{Convexity of quasihyperbolic and $j$-balls on convex domains}

In this section, we study convexity of quasihyperbolic and $j$-metric balls. We present a generalization of a result of Martio and V\"ais\"al\"a \cite[2.13]{MartioVaisala11}.

\begin{theorem}\label{thm: cd} 
Let $\X$ be a Banach space and $\Omega\subsetneq\X$ a convex domain. Then all quasihyperbolic balls and $j$-balls on $\Omega$ are convex. Moreover, if $\Omega$ is uniformly convex, or if $\X$ is strictly convex and has the RNP, then
these balls are strictly convex.
\end{theorem}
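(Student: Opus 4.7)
The plan rests on a single elementary fact about convex domains: if $\Omega\subsetneq \X$ is convex, then $d\colon \Omega\to (0,\infty)$ is concave (and $1$-Lipschitz). Indeed, since $\B(x_i,d(x_i))\subset \Omega$, convexity of $\Omega$ together with the identity $(1-t)\B(0,r_1)+t\B(0,r_2)=\B(0,(1-t)r_1+t r_2)$ gives $\B(x,(1-t)d(x_1)+t d(x_2))\subset \Omega$ for $x=(1-t)x_1+t x_2$, hence $d(x)\ge (1-t)d(x_1)+t d(x_2)$.

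For the $j$-balls the argument is direct from the defining formula, essentially mirroring the calculation in the proof of Theorem~\ref{thm: sl}. The condition $j_\Omega(x_0,y)\le r$ reads $\|x_0-y\|\le \rho(d(x_0)\wedge d(y))$ with $\rho:=e^r-1$. For $y_1,y_2\in \B_j(x_0,r)$ and $y=(1-t)y_1+t y_2$, the triangle inequality gives $\|x_0-y\|\le \rho d(x_0)$, and combining it with the concavity of $d$ yields $\|x_0-y\|\le \rho\bigl((1-t)d(y_1)+t d(y_2)\bigr)\le \rho d(y)$, so $y\in \B_j(x_0,r)$.

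For the $k$-balls the approach is to interpolate paths. Given $y_1,y_2\in \B_k(x_0,r)$ and $\epsilon>0$, pick rectifiable $\gamma_i\colon [0,1]\to \Omega$ from $x_0$ to $y_i$ with $\ell_i:=\ell_k(\gamma_i)<r+\epsilon$, reparametrized to constant $k$-speed so that $\|D\gamma_i(s)\|=\ell_i\, d(\gamma_i(s))$ for a.e.\ $s$. Set $\gamma(s):=(1-t)\gamma_1(s)+t\gamma_2(s)$, a path joining $x_0$ to $(1-t)y_1+t y_2$. The triangle inequality for the length element together with the concavity of $d$ then gives
\[
\ell_k(\gamma)\le \int_0^1 \frac{(1-t)\ell_1\, d(\gamma_1(s))+t\ell_2\, d(\gamma_2(s))}{(1-t)d(\gamma_1(s))+t d(\gamma_2(s))}\, ds \le \max(\ell_1,\ell_2)<r+\epsilon,
\]
since the integrand is a convex combination of $\ell_1$ and $\ell_2$ with positive weights $(1-t)d(\gamma_1(s))$ and $t d(\gamma_2(s))$. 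Letting $\epsilon\to 0$ we obtain $k(x_0,(1-t)y_1+t y_2)\le r$.

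The strict convexity part is where the real work lies. When $k(x_0,y_1)\ne k(x_0,y_2)$ one can choose $\ell_1\ne\ell_2$, which makes the integrand pointwise strictly smaller than $\max(\ell_1,\ell_2)$, and the corresponding $j$-calculation is strict by a similar count. The remaining case requires the geometric hypothesis. Under uniform convexity of $\Omega$, the concavity of $d$ becomes quantitatively strict: there is a positive lower bound for $d(\gamma(s))-(1-t)d(\gamma_1(s))-t d(\gamma_2(s))$ depending on the separation $\|\gamma_1(s)-\gamma_2(s)\|$ and on an upper bound for the $d(\gamma_i(s))$, which tightens the denominator in the integrand. Under strict convexity of $\X$ together with RNP, the G\^ateaux derivatives $D\gamma_i$ exist a.e.\ and $y_i=x_0+\int_0^1 D\gamma_i\, ds$; since $y_1\ne y_2$, these derivatives cannot be positive scalar multiples of one another on almost every $s$, and then strict convexity of $\X$ makes $\|(1-t)D\gamma_1+tD\gamma_2\|<(1-t)\|D\gamma_1\|+t\|D\gamma_2\|$ strict on a set of positive measure, tightening the numerator. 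The main technical hurdle is to ensure that the strict gain survives the limit $\epsilon\to 0$; this is handled by isolating in advance, by continuity, a subinterval on which $\gamma_1$ and $\gamma_2$ are uniformly separated and applying the quantitative estimate there. The analogous but simpler chain of inequalities produces strict convexity of the $j$-balls.
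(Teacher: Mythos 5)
Your proposal is correct and follows essentially the same route as the paper: concavity of the distance function $d$ on a convex domain, averaging of constant-speed near-geodesics, the pointwise triangle inequality for the length element, and the same two mechanisms (uniform convexity of $\Omega$ strictifying the denominator, strict convexity of $\X$ plus RNP strictifying the numerator) for the strict case. The only cosmetic difference is that you bound the averaged integrand by $\max(\ell_1,\ell_2)$ where the paper first normalizes both paths to equal quasihyperbolic length so that the ratio is constant; your explicit remark about making the strict gain survive the $\epsilon\to 0$ limit addresses a point the paper leaves implicit.
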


\begin{fact}\label{fact abcd}
Let $a,b,c,d>0$ be constants such that $a/c = b/d$. Then 
\[\frac{ta+(1-t)b}{tc+(1-t)d}=\frac{a}{c}\quad \mathrm{for}\ t\in [0,1].\]
\end{fact}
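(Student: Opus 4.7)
The plan is to introduce the common ratio as a single variable and then factor it out of the numerator. Specifically, I would set $\lambda := a/c = b/d$, which by the hypothesis is a well-defined positive number, and rewrite the identity $a = \lambda c$ and $b = \lambda d$.

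Substituting these into the numerator gives
\[
ta + (1-t)b \;=\; t\lambda c + (1-t)\lambda d \;=\; \lambda\bigl(tc + (1-t)d\bigr).
\]
Since $c,d>0$ and $t\in[0,1]$, the denominator $tc + (1-t)d$ is strictly positive, so dividing yields
\[
\frac{ta + (1-t)b}{tc + (1-t)d} \;=\; \lambda \;=\; \frac{a}{c},
\]
which is the claim.

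There is no real obstacle here; the content of the fact is that the ratio of two convex combinations with matching weights equals the common ratio of the corresponding entries. An equivalent phrasing one could mention, if a slightly more symmetric presentation is desired, is that the hypothesis $a/c = b/d$ is equivalent to $ad = bc$, and expanding $c\bigl(ta + (1-t)b\bigr) - a\bigl(tc + (1-t)d\bigr) = (1-t)(bc - ad) = 0$ gives the same conclusion after dividing by the positive quantity $c\bigl(tc + (1-t)d\bigr)$. Either route is a few lines.
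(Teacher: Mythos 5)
Your proof is correct; the substitution $a=\lambda c$, $b=\lambda d$ with $\lambda=a/c$ immediately factors the numerator and gives the claim. The paper states this Fact without proof (treating it as elementary), and your argument is exactly the natural verification one would supply.
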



\begin{proof}[Proof of Theorem \ref{thm: cd}]
We will prove the case with the quasihyperbolic metric, which is more complicated. Fix $x\in \Omega$ and $r>0$. Let $y,z\in \B_{k}(x,r)$. Our aim is to verify that 
$sy+(1-s)z\in \B_{k}(x,r)$ for $s\in [0,1]$. 
Thus, we may assume that $k(x,y)=k(x,z)=r$ in the first place. By using suitable translations, we may assume that $x=0$ as well. It suffices to show that 
\begin{equation}\label{eq: kyz}
k(x,sy+(1-s)z)\leq r,\quad \mathrm{for}\ s\in [0,1].
\end{equation}
We use the following short-hand notation
\[
\ell_k(\gamma,t_{1},t_{2})=\int_{t_{1}}^{t_{2}}\frac{\|d\gamma(t)\|}{d(\gamma(t))},
\]
where $\gamma\colon I\to \Omega$ is a rectifiable path and $0\leq t_{1}\leq t_{2}\leq 1$. We will also write $\ell_k(\gamma)$ instead of $\ell_k(\gamma,0,1)$.

Let $\epsilon>0$ and let $\gamma_{0},\gamma_{1}\colon I\to \Omega$ be rectifiable paths such that $\gamma_{0}(0)=\gamma_{1}(0)=0$, $\gamma_{0}(1)=z,\ \gamma_{1}(1)=y$, 
$\ell_k(\gamma_{0}) \leq r+\epsilon$ and $\ell_k(\gamma_{1})\leq r+\epsilon$.
We may assume by symmetry that $\ell_k(\gamma_{0})\leq \ell_k(\gamma_{1})$. Moreover, by suitably modifying and then re-parameterizing $\gamma_{0}$, we may assume that 
$\ell_k(\gamma_{0})=\ell_k(\gamma_{1})$ and $\ell_k(\gamma_{0},0,t)=\ell_k(\gamma_{1},0,t)=t\ell_k(\gamma_{0})$ for $t\in [0,1]$. Thus we have that 
\begin{equation}\label{eq: ae}
\frac{\|D\gamma_{0}(t)\|}{d(\gamma_{0}(t))}=\frac{\|D\gamma_{1}(t)\|}{d(\gamma_{1}(t))}=\ell_k(\gamma_{0})=\ell_k(\gamma_{1})\quad \mathrm{for\ a.e.}\ t\in [0,1].
\end{equation}
Observe that the above numerators need not be continuous, so that these terms do not coincide, at least a priori, for every $t$.

\begin{figure}
\begin{center}
\includegraphics[width=4.5cm]{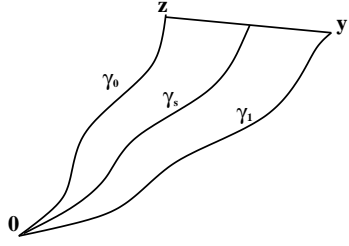}
\end{center}
\caption{The average path $\gamma_s$.}\label{avgfig}
\end{figure}

Define an average path $\gamma_{s}$ (see Figure \ref{avgfig}) for $s\in [0,1]$ by $\gamma_{s}=s\gamma_{1}+(1-s)\gamma_{0}$. Clearly, $\gamma(0)_{s}=0$ and $\gamma_{s}(1)=sy+(1-s)z$ for $s\in [0,1]$. 
We claim that 
\begin{equation}\label{eq: int1}
\ell_k(\gamma_{s}) \leq s\ell_k(\gamma_{1}) + (1-s)\ell_k(\gamma_{0})=\ell_k(\gamma_{0})=\ell_k(\gamma_{1}).
\end{equation}
Because $\epsilon$ was arbitrary, this estimate yields \eqref{eq: kyz}, which provides the required result. 

To obtain the estimate \eqref{eq: int1}, observe that the inequality
\begin{equation}\label{eq: dgamma}
\|D\gamma_{s}\|\leq s\|D\gamma_{1}\|+(1-s)\|D\gamma_{0}\|,\quad \mathrm{for}\ s\in I\
\end{equation}
holds pointwise for a.e. $t\in I$ in the sense of \eqref{lengthelement}. Indeed, here we recall the definition of the norm length $\ell$ and apply the triangle inequality. 
Given $v,u\in \Omega$, it holds that 
\[\U_{\|\cdot\|}(v,d(v))\cup \U_{\|\cdot\|}(u,d(u))\subset \Omega,\] 
and by the convexity of $\Omega$, it holds that 
\[\{sa+(1-s)b:\ a\in \U_{\|\cdot\|}(v,d(v)),\  b\in \U_{\|\cdot\|}(u,d(u)),\ s\in [0,1]\}\subset\Omega.\] 
Moreover, the above set contains $\U_{\|\cdot\|}(sv+(1-s)u,sd(v)+(1-s)d(u))$, see Figure \ref{uvballfig}. See also \cite[Lemma 3.5]{Vaisala05}.

\begin{figure}
\begin{center}
\includegraphics[width=6cm]{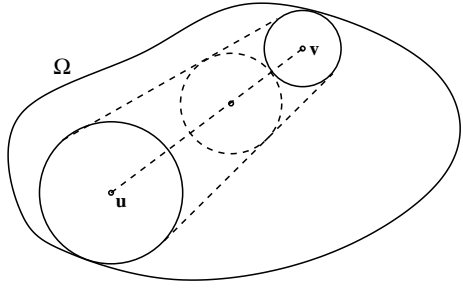}
\end{center}
\caption{The ball $\U_{\|\cdot\|}(sv+(1-s)u,sd(v)+(1-s)d(u))$.}\label{uvballfig}
\end{figure}

This means that
\begin{equation}\label{eq: dyz}
d(su+(1-s)v)\geq sd(u)+(1-s)d(v).
\end{equation}
Now, by combining \eqref{eq: dgamma}, \eqref{eq: dyz}, \eqref{eq: ae} and Fact \ref{fact abcd} we obtain
\[
\ell_k(\gamma_{s})=\int_{0}^{1}\frac{\|d \gamma_{s}(t)\|}{d(\gamma_{s}(t))}\leq \int_{0}^{1}\frac{s\|d \gamma_{1}(t)\|+(1-s)\|d \gamma_{0}(t)\|}{d(\gamma_{s}(t))}
\]
\[
\leq \int_{0}^{1}\frac{s\|d \gamma_{1}(t)\|+(1-s)\|d \gamma_{0}(t)\|}{sd(\gamma_{1}(t))+(1-s)d(\gamma_{0}(t))}=\int_{0}^{1}\ell_k(\gamma_{0}) \,dt=\ell_k(\gamma_{0}).
\]
This completes the proof for the first part of the statement. 

In the latter part, suppose that $\gamma_{0}\neq\gamma_{1}$. Then $\gamma_{s}(t),\ 0<s<1$: 
\begin{itemize}
\item{Satisfies \eqref{eq: dyz} strictly for a set of values of $t$ having positive measure if $\Omega$ is uniformly convex.}
\item{Satisfies \eqref{eq: dgamma} strictly for a set of values of $t$ having positive measure if $\X$ is strictly convex and has the RNP.} 
\end{itemize}
The strict convexity of the quasihyperbolic balls follows. The proof for the $j$-metric is similar.
\end{proof}

\subsection{Starlike domains}

Next we show that if $\Omega$ is starlike with respect to $x_0\in \Omega$, then all quasihyperbolic and $j$-metric balls centered at $x_0$ are starlike as well.

\begin{theorem}\label{thm_starlike_starlike}
Let $\X$ be Banach space, $x_{0}\in \X$ and let $\Omega\subset \X$ be a domain which is starlike with respect to $x_{0}$. Then all balls $\B_{j}(x_{0},r)$ and $\B_{k}(x_{0},r)$ of $\Omega$ are starlike.
\end{theorem}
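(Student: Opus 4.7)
The plan is to translate so that $x_0=0$ (the distance-to-boundary function, starlikeness, and both metrics $j,k$ are translation-invariant), and then exploit the following elementary geometric lemma: if $\Omega$ is starlike with respect to $0$, then
\[
d(tz)\ge t\,d(z)\qquad\text{for every } z\in\Omega \text{ and } t\in[0,1].
\]
This holds because for every $b\in\U_{\|\cdot\|}(z,d(z))\subset\Omega$ starlikeness gives $[0,b]\subset\Omega$, hence $tb\in\Omega$; therefore $\U_{\|\cdot\|}(tz,t\,d(z))=t\,\U_{\|\cdot\|}(z,d(z))\subset\Omega$. In particular $t\Omega\subset\Omega$ for $t\in[0,1]$, which I will also use in the path argument below.

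For the $j$-ball, given $y\in\B_j(0,r)$ and $t\in[0,1]$ I would verify $j(0,ty)\le j(0,y)$ directly. Using the lemma together with the trivial bound $d(0)\wedge t\,d(y)\ge t\bigl(d(0)\wedge d(y)\bigr)$ (immediate since $t\in[0,1]$), I get
\[
\frac{\|ty\|}{d(0)\wedge d(ty)}=\frac{t\|y\|}{d(0)\wedge d(ty)}\le\frac{t\|y\|}{t\bigl(d(0)\wedge d(y)\bigr)}=\frac{\|y\|}{d(0)\wedge d(y)},
\]
and applying $\log(1+\,\cdot\,)$ to both sides gives $j(0,ty)\le j(0,y)\le r$.

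For the quasihyperbolic ball, given $y\in\B_k(0,r)$ and $\epsilon>0$, I would pick a rectifiable path $\gamma\colon I\to\Omega$ from $0$ to $y$ with $\ell_k(\gamma)\le k(0,y)+\epsilon$, and consider the dilated path $\tilde\gamma(s):=t\gamma(s)$. By the remark above $\tilde\gamma$ lies in $\Omega$, joins $0$ to $ty$, and satisfies $\|D\tilde\gamma(s)\|=t\|D\gamma(s)\|$ for a.e.\ $s$ in the sense of \eqref{lengthelement}. Combining this with the lemma, $d(\tilde\gamma(s))\ge t\,d(\gamma(s))$, yields
\[
\ell_k(\tilde\gamma)=\int_0^1\frac{\|D\tilde\gamma(s)\|}{d(\tilde\gamma(s))}\,ds\le\int_0^1\frac{t\|D\gamma(s)\|}{t\,d(\gamma(s))}\,ds=\ell_k(\gamma),
\]
whence $k(0,ty)\le k(0,y)+\epsilon$; letting $\epsilon\to 0$ concludes.

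The main obstacle I had anticipated was constructing a useful family of paths from $x_0$ to an interior point of the segment $[x_0,y]$ given only information about paths ending at $y$. Once the natural dilation $\gamma\mapsto t\gamma$ is in play, the key inequality $d(tz)\ge t\,d(z)$ makes both the $j$-estimate and the $k$-estimate essentially routine. The only other point that needs a brief check is the preservation of absolute continuity of the length parametrization under scaling by a constant, which is transparent since $\tilde\gamma=t\gamma$ scales both increments and lengths by $|t|$.
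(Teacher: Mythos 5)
Your proof is correct and follows essentially the same route as the paper: after normalizing the star center, both arguments dilate a near-optimal path toward $x_0$ via $\gamma_s = s\gamma + (1-s)x_0$ and combine the scaling identity $\|D\gamma_s\| = s\|D\gamma\|$ with the starlikeness inequality $d(sx+(1-s)x_0)\ge s\,d(x)$, which is exactly your $d(tz)\ge t\,d(z)$. The only difference is that you write out the (easier) $j$-metric case explicitly, which the paper omits as ``similar but easier.''
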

\begin{proof}
We will only consider the case with the quasihyperbolic metric, since the other case is similar but easier. Fix a path $\gamma\colon I\to \Omega$ with $\gamma(0)=x_{0}$ and $\ell_{k}(\gamma)<\infty$.
Our aim is to show that the path $\gamma_{s}$ defined by $\gamma_{s}(t)=s\gamma(t)+(1-s)x_{0}$ satisfies $\ell_{k}(\gamma_{s})\leq \ell_{k}(\gamma)$ for every $s\in I$. This suffices for the claim, as it follows easily that 
the $k$-ball centered at $x_{0}$ will be starlike. 

Similarly as in the proof of Theorem \ref{thm: cd} we observe that 
\[\{sy+(1-s)x_{0}:\ y\in \B_{\|\cdot\|}(x,r)\}=\B_{\|\cdot\|}(sx+(1-s)x_{0},sr)\]
for $x\in \X$, $r>0$ and by using the starlikeness of $\Omega$ we get 
\begin{equation}\label{eq_sd}
d(sx+(1-s)x_{0})\geq sd(x)\quad \mathrm{for}\ x\in \Omega.
\end{equation}
On the other hand, it is clear that 
\begin{equation}\label{eq_deriv_starlike}
\|D\gamma_{s}\|= s\|D\gamma\|\quad \mathrm{for\ a.e.}\ t\in I
\end{equation}
in the sense of \eqref{lengthelement}. Thus, it follows that 
\[\ell_{k}(\gamma_{s})=\int_{I}\frac{\|d\gamma_{s}\|}{d(\gamma_{s})}=\int_{I}\frac{s\|d\gamma\|}{d(\gamma_{s})}\leq \int_{I}\frac{s\|d\gamma\|}{sd(\gamma)}=\ell_{k}(\gamma).\]
\end{proof}

\section{Examples}

It is a natural question, if for each Banach space $\X$ and domain $\Omega\subset \X$ there is a critical radius 
$R_{\Omega}>0$ such that $j$ and $k$ balls on $\Omega$ with radius at most $R_{\Omega}$ are convex. The next 
example shows that this is not the case for $j$-balls. This example also appears to provide evidence suggesting that the similar holds for $k$-balls.

\begin{figure}[h]
\begin{center}
\includegraphics[width=6cm]{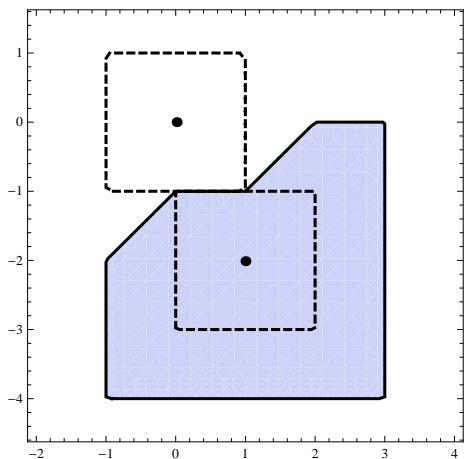}
\end{center}
\caption{There is no critical radius $R>0$ such that the ball $\B_j(x,r)$ is convex for all $x\in \Omega = \ell^{\infty}(2)\setminus\{0\}$ and $0<r<R$.}\label{nonconvexballfig}
\end{figure}

This observation can be used further to obtain that there exists a reflexive, strictly convex and smooth space $\X$
such that the punctured space $\Omega=\X\setminus \{0\}$ admits no critical radius of convexity for 
$j$-balls. Namely, this kind of space $\X$ is obtained by putting
\[\X=\ell^{2}(2)\oplus_{2} \ell^{3}(2)\oplus_{2} \ell^{4}(2) \oplus_{2} \ldots\]
This is a standard example of a Banach space, which is reflexive but not uniformly convex and not uniformly smooth.

In \cite[2.14]{MartioVaisala11} Martio and V\"{a}is\"{a}l\"{a} asked whether the quasihyperbolic balls of convex domains of uniformly convex Banach spaces are quasihyperbolically convex. 
More precisely, given two points $a$ and $b$ of the quasihyperbolic ball $B\subset \Omega$, does there exist a geodesic $\gamma$ joining $a$ and $b$, which is contained in the ball $B$. 
Here the domain $\Omega$ was assumed to be convex and the length of the geodesic is measured with respect to the quasihyperbolic metric. It turns out the the answer is negative, as the following counterexample shows.
\begin{example}
Let $\Omega=\{(x,y)\in \R^{2}:\ y<0\}$ and we will first consider $\Omega$ as a subset of $\ell^{\infty}(2)=(\R^{2},\|\cdot\|_{\infty})$. Let $x=(0,-1)$, 
\[
r=\ln(2)=\int_{1}^{2}t^{-1}\ dt.
\] 
We will study the ball $\B_{k}(x,r)$. Put $a=(-1,-2)$, $b=(1,-2)$ and observe that $\{ta+(1-t)b:\ t\in [0,1]\}$ is included in $\partial\B_{k}(x,r)$. An intuition, which helps in computing the quasihyperbolic lengths of paths, is that one can move to the directions 
$(-1,-1)$, $(0,-1)$ and $(1,-1)$ at the same cost because of the choice of the norm. Note that $z_{2}\geq -2$ for any $(z_{1},z_{2})\in \B_{k}(x,r)$.

Now, an easy computation shows that any path $\gamma\subset \B_{k}(x,r)$, which joins $a$ and $b$ must have quasihyperbolic length at least
\[
\int_{-1}^{1}\frac{1}{2}\ dt =1. 
\]
However, the broken line $\gamma_{0}$ connecting $a,b$ through the point $c=(0,-3)$ has length 
\[
2\int_{0}^{1}\frac{1}{3-t}\ dt=\ln\Big(\frac{9}{4}\Big)<1,
\]
see Figure \ref{counterexfig}. The existence of geodesics is clear in this choice of space. Thus $\B_{k}(x,r)$ is not quasiconvex.

\begin{figure}
\begin{center}
\includegraphics[width=7cm]{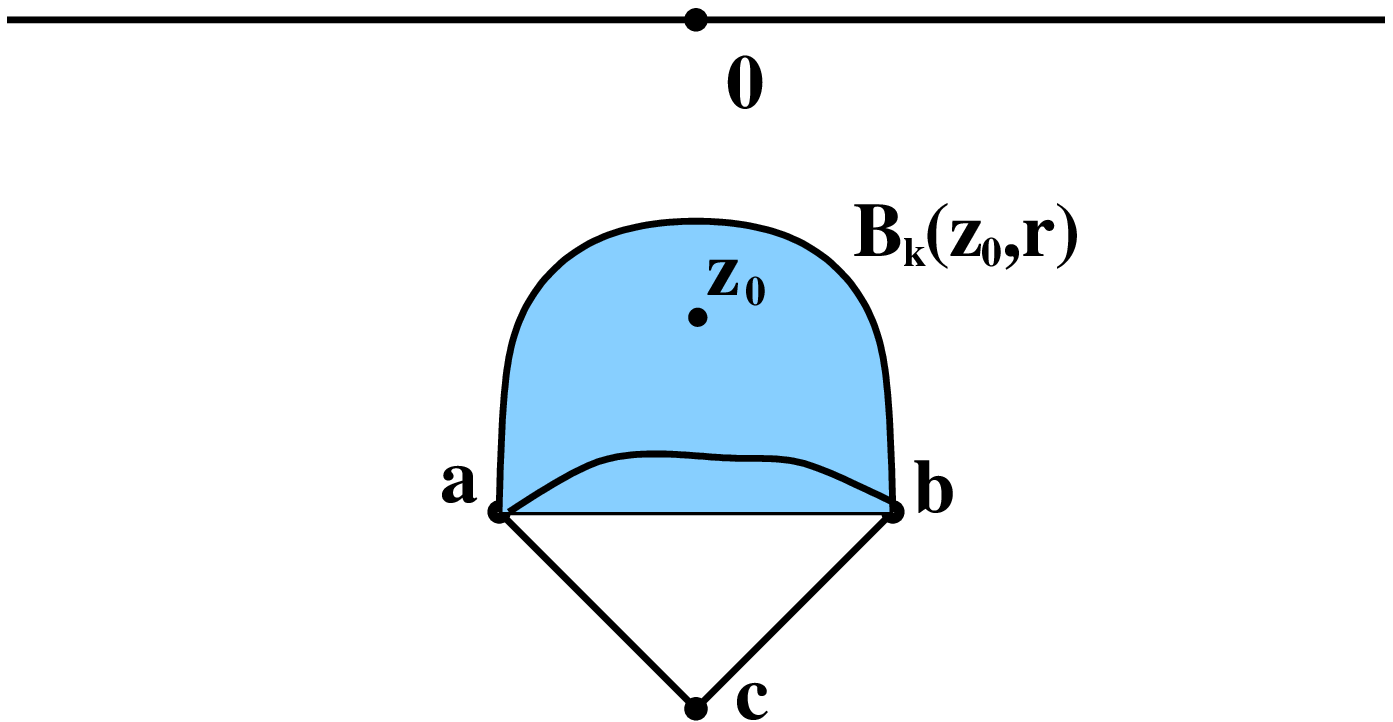}
\end{center}
\caption{The path $\gamma_0$ consists of line segments $[a,c]$ and $[c,b]$.}\label{counterexfig}
\end{figure}

This example does not change considerably if one considers the domain $\Omega=(-6,6)\times (0,6)$ instead. Observe that the space $\ell^{\infty}(2)$ is certainly not uniformly convex, see Figure \ref{nonconvexballfig}. However, because the quasihyperbolic metric depends continuously on the selection of the norm, we could apply the space $\ell^{p}(2)$ for large $p<\infty$ in place of $\ell^{\infty}(2)$ to produce similar examples, in which case we are dealing with uniformly convex spaces.
\end{example}

\section{Discussion: Convexity of quasihyperbolic balls in a punctured Banach space}

In the the work of Kl\'en \cite{Klen08a} critical radii are provided for the convexity of quasihyperbolic and $j$-balls on punctured $\R^n$.
Again, it is a natural question whether the existence of such radii can be established, mutatis mutandis, in the Banach space setting.
We already noted in Remark \ref{remark_Hilbert} that the same approach is applicable in general Hilbert spaces. On the other hand, as pointed out in the Examples section, 
there is a concrete reflexive, strictly convex and smooth Banach space such that when punctured, it has non-convex $j$-balls of arbitrary small $j$-radius. 
Presumably the similar holds for $k$-balls on the same space.

In order to check the convexity of a $k$-ball in a punctured space it would seem natural to exploit an averaging argument similar to the proof of Theorem \ref{thm: cd}. We have the following partial result which comes very close to providing such a device.

\begin{theorem}\label{lm: QHlemma}
Let $\X$ be a Banach space, which is uniformly convex and uniformly smooth, both moduli being of power type $2$. We consider the quasihyperbolic metric $k$ on $\Omega=\X\setminus \{0\}$. Then there exists $R>0$ as follows.
Assume that $\gamma_{1},\gamma_{2}\colon [0,t_{2}]\to \Omega$ are rectifiable paths satisfying the following conditions:
\begin{enumerate}
\item[(i)]{$\gamma_{1},\gamma_{2}$ and $(\gamma_{1}+\gamma_{2})/2$ are contained in $\B_{\|\cdot \|}(0,2)\setminus \B_{\|\cdot \|}(0,1)$},
\item[(ii)]{$\gamma_{1}(0)=\gamma_{2}(0)$},
\item[(iii)]{$\ell_{k}(\gamma_{1})\vee \ell_{k}(\gamma_{2})\leq R$} 
\item[(iv)]{$\ell_{\|\cdot\|}(\gamma_{1})=t_{1}\leq t_{2}=\ell_{\|\cdot\|}(\gamma_{2})$}
\item[(v)]{The paths are parameterized with respect to $\ell_{\|\cdot\|}$, except that\\ $\gamma_{1}(t)=\gamma_{1}(t_{1})$ for $t\in [t_{1},t_{2}]$.}
\end{enumerate}
Then the following estimate holds: 
\[
\frac{\ell_{k}(\gamma_{1})+\ell_{k}(\gamma_{2})}{2}+\int_{t_{1}}^{t_{2}}\frac{\|d\gamma_{2}\|}{2d(\gamma_{2})}
\geq \ell_{k}\left(\frac{\gamma_{1}+\gamma_{2}}{2}\right)+\int_{0}^{t_{1}}\frac{\delta_{\X}(\|D(\gamma_{1}-\gamma_{2})\|)}{\|\gamma_{1}\|+\|\gamma_{2}\|}\ ds.
\] 
\end{theorem}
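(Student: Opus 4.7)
My plan is to adapt the averaging argument from Theorem \ref{thm: cd} to the non-convex domain $\Omega=\X\setminus\{0\}$, quantitatively tracking the deficit using uniform smoothness of power type $2$. Since $d(x)=\|x\|$ on $\Omega$, hypothesis (v) gives $\|D\gamma_{i}\|=1$ a.e.\ on $[0,t_{1}]$, while on $[t_{1},t_{2}]$ we have $\|D\gamma_{1}\|=0$ and $\|D\gamma_{2}\|=1$; writing $\bar\gamma=(\gamma_{1}+\gamma_{2})/2$, we get $\|D\bar\gamma\|=1/2$ on $[t_{1},t_{2}]$. I would split both sides of the target inequality along $[0,t_{1}]\cup[t_{1},t_{2}]$ and treat the two intervals separately.

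On the interval $[t_{1},t_{2}]$, after cancellations the claim reduces to the pointwise bound $1/\|\gamma_{2}\|\geq 1/(2\|\bar\gamma\|)$, which is immediate from hypothesis (i) since $\|\bar\gamma\|\geq 1$ and $\|\gamma_{2}\|\leq 2$. For the main interval $[0,t_{1}]$, write $a=\|\gamma_{1}\|$, $b=\|\gamma_{2}\|$, $c=\|\bar\gamma\|$, and $\delta=\delta_{\X}(\|D(\gamma_{1}-\gamma_{2})\|)$; the target becomes the pointwise estimate
\[\frac{1}{2a}+\frac{1}{2b}\ \geq\ \frac{\|D\bar\gamma\|}{c}+\frac{\delta}{a+b}.\]
Uniform convexity applied to the unit vectors $D\gamma_{1},D\gamma_{2}$ yields $\|D\bar\gamma\|\leq 1-\delta$. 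Uniform smoothness of power type $2$ makes $\|\cdot\|^{2}$ have a Lipschitz gradient, and hence $x\mapsto 1/\|x\|$ is $C^{1,1}$ on the annulus; a second-order Taylor expansion at $\bar\gamma$, whose first-order terms cancel because $\gamma_{1}+\gamma_{2}-2\bar\gamma=0$, furnishes a constant $C'=C'(\X)$ such that
\[\frac{1}{2a}+\frac{1}{2b}\ \geq\ \frac{1}{c}-C'\|\gamma_{1}-\gamma_{2}\|^{2}.\]
Combining these and using $c\leq 2$, $a+b\leq 4$ (so $1/c-1/(a+b)\geq 1/8=:\kappa$), the difference of the two integrands on $[0,t_{1}]$ is bounded below pointwise by $\kappa\delta-C'\|\gamma_{1}-\gamma_{2}\|^{2}$.

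This expression may be negative, so I would integrate. Power type $2$ provides $K>0$ with $\delta\geq K\|D(\gamma_{1}-\gamma_{2})\|^{2}$, and since $\gamma_{1}(0)=\gamma_{2}(0)$, Cauchy--Schwarz gives $\|\gamma_{1}(s)-\gamma_{2}(s)\|^{2}\leq s\int_{0}^{s}\|D(\gamma_{1}-\gamma_{2})\|^{2}\,d\tau$; a Fubini swap then yields
\[\int_{0}^{t_{1}}\|\gamma_{1}-\gamma_{2}\|^{2}\,ds\ \leq\ \frac{t_{1}^{2}}{2}\int_{0}^{t_{1}}\|D(\gamma_{1}-\gamma_{2})\|^{2}\,ds.\]
Since $\|\gamma_{i}\|\geq 1$, hypothesis (iii) forces $t_{i}\leq 2R$; choosing $R$ small enough that $\kappa K\geq C'(2R)^{2}/2$ makes the integrated difference non-negative and completes the proof.

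The decisive step is the $C^{1,1}$ Taylor estimate for $1/\|\cdot\|$ on the annulus: it is here that uniform smoothness of power type $2$, rather than mere Fr\'echet differentiability, enters essentially, supplying a Lipschitz-gradient constant uniform on the annulus. Once this is in hand, the argument reduces to careful bookkeeping balanced against the uniform-convexity gain of power type $2$ via the Cauchy--Schwarz--Fubini trade-off sketched above.
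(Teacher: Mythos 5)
Your overall architecture matches the paper's: split at $t_1$, dispose of $[t_1,t_2]$ by the elementary bound $2\|\bar\gamma\|\ge\|\gamma_2\|$ (using $\|\bar\gamma\|\ge1$, $\|\gamma_2\|\le2$), and on $[0,t_1]$ play a pointwise uniform-convexity gain $\delta_{\X}(\|D(\gamma_1-\gamma_2)\|)$ against a smoothness loss of order $\|\gamma_1-\gamma_2\|^2$, finally absorbing the loss into the gain after integration because $\|\gamma_1-\gamma_2\|$ is itself an integral of $\|D(\gamma_1-\gamma_2)\|$ and $t_1$ is small. You genuinely diverge in two places. (1) The paper controls the loss by the inequality $\|\gamma_1\|+\|\gamma_2\|\le\|\gamma_1+\gamma_2\|\bigl(1+2\rho_{\X}(\cdot)\bigr)$, read off directly from the definition of the modulus of smoothness, whereas you invoke the (true but nontrivial) renorming fact that power-type-$2$ smoothness makes $\|\cdot\|^2$, hence $1/\|\cdot\|$ on an annulus, $C^{1,1}$, and then Taylor-expand at the midpoint so the first-order terms cancel. (2) The paper's Lemma \ref{lm: Holder} is replaced by your Cauchy--Schwarz/Fubini computation, which for $p=2$ proves the same estimate (with the slightly better constant $t^2/2$ in place of $t^2$). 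Your route is cleaner at the second step and heavier at the first, since the $C^{1,1}$ equivalence is exactly the content that the paper's hands-on manipulation of $\rho_{\X}$ avoids having to cite.

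Three small repairs are needed. First, $1/c-1/(a+b)\ge1/8$ does not follow from $c\le2$ and $a+b\le4$ alone (those bounds only give $1/c-1/(a+b)\ge0$, e.g.\ formally at $c=2$, $a+b=2$); you need the triangle inequality $c=\|\bar\gamma\|\le(a+b)/2$, which gives $1/c-1/(a+b)\ge1/(a+b)\ge1/4$. Second, $t_i\le2R$ follows from $\|\gamma_i\|\le2$ (so $\ell_k(\gamma_i)\ge t_i/2$), not from $\|\gamma_i\|\ge1$, which gives the reverse comparison. Third, the Taylor expansion requires the segments $[\bar\gamma,\gamma_i]$ to lie in a region where $1/\|\cdot\|$ is uniformly $C^{1,1}$; the annulus is not convex, but since $\gamma_1(0)=\gamma_2(0)$ and both paths are $1$-Lipschitz, $\|\gamma_1(s)-\gamma_2(s)\|\le2s\le4R$, so for small $R$ these segments stay in, say, $\{x:1/2\le\|x\|\le2\}$ and the objection is harmless once noted. (One could also record, as the paper does, that reflexivity gives the RNP needed to recover $\gamma_1-\gamma_2$ from its derivative in the Cauchy--Schwarz step.) None of these affects the viability of the argument.
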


Before giving the proof we will make some comments. If it were the case that $t_{1}=t_{2}$, then this auxiliary fact would provide the required inequlity, similar as applied in the proof of Theorem \ref{thm: cd}, to 
prove the existence of a critical radius of convexity of $k$-balls in a puncture Banach space.

Note that, because of the term
\[
\int_{t_{1}}^{t_{2}}\frac{\|d\gamma_{2}\|}{2d(\gamma_{2})},
\]
we do not obtain a critical radius for convexity in punctured Banach spaces. However, the above result yields that if $\gamma_{1}$ and $\gamma_{2}$ have both coinciding norm-lengths and $k$-lengths, then either the point-wise average path has strictly smaller $k$-length or 
$D(\gamma_{1}-\gamma_{2})=0$ a.e. The latter means that $\gamma_{1}=\gamma_{2}$, in fact. Thus we obtain partial information related to the \emph{strict} convexity of the $k$-balls and the \emph{uniqueness} of geodesics. In particular, if $t_1=t_2$ then either
\[
\frac{\ell_{k}(\gamma_{1})+\ell_{k}(\gamma_{2})}{2} > \ell_{k}\left(\frac{\gamma_{1}+\gamma_{2}}{2}\right),
\textrm{ or } \gamma_1=\gamma_2.
\]
 
To comment on the assumptions, the postulates $(i)$-$(v)$ are not restrictive for the purposes under discussion. Namely, one may apply suitable scalings which are isometries with respect to the $k$-metric.
Any Hilbert space has the best possible power types of uniform convexity and uniform smoothness, namely $p=2$, and, in fact, the optimal modulus functions. It is known that any Banach space has the uniform convexity power type at least $2$ 
and the uniform smoothness power type at most $2$. Our method in the proof of Theorem \ref{lm: QHlemma} requires that the asymptotics of the moduli should be essentially the same, and this is why we assumed that the power types of the moduli should coincide, 
i.e. $p=2$ for both accounts. It is perhaps worthwhile to pay close attention to how Lemma \ref{lm: Holder} is applied at the end of the proof. We note that any Banach space with the coinciding power types of the moduli must be linearly homeomorphic to a Hilbert space. 

\begin{lemma}\label{lm: Holder}
Let $f\in L^{p},\ 1\leq p<\infty,$ such that $f>0$ a.e. and let $F(t)=\int_{0}^{t}f(s)\ ds$, $0\leq t\leq 1$. Then 
\[\frac{\int_{0}^{t}F(s)^{p}\ ds}{\int_{0}^{t}f(s)^{p}\ ds}\leq t^{p} \quad \mathrm{for}\ 0\leq t\leq 1.\]
\end{lemma}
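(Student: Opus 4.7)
The plan is to reduce the estimate to Hölder's inequality applied to $f$ against the constant function $1$ on $[0,s]$. For $p>1$, the conjugate exponent is $q=p/(p-1)$, so that
\[
F(s)=\int_{0}^{s} f(u)\cdot 1\,du \le \Bigl(\int_{0}^{s} f(u)^{p}\,du\Bigr)^{1/p}\Bigl(\int_{0}^{s} 1\,du\Bigr)^{1/q} = s^{(p-1)/p}\Bigl(\int_{0}^{s} f(u)^{p}\,du\Bigr)^{1/p}.
\]
Raising to the $p$th power yields $F(s)^{p}\le s^{p-1}\int_{0}^{s} f(u)^{p}\,du$ pointwise in $s$.

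Next, since $f>0$ a.e., the map $s\mapsto \int_{0}^{s} f(u)^{p}\,du$ is nondecreasing, so for $s\in[0,t]$ we can enlarge the inner integral to run over $[0,t]$. Integrating the resulting inequality $F(s)^{p}\le s^{p-1}\int_{0}^{t} f(u)^{p}\,du$ over $s\in[0,t]$ gives
\[
\int_{0}^{t} F(s)^{p}\,ds \le \Bigl(\int_{0}^{t} f(u)^{p}\,du\Bigr)\int_{0}^{t} s^{p-1}\,ds = \frac{t^{p}}{p}\int_{0}^{t} f(u)^{p}\,du \le t^{p}\int_{0}^{t} f(u)^{p}\,du,
\]
where the last step uses $p\ge 1$. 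Dividing by $\int_{0}^{t} f(u)^{p}\,du$ (which is positive since $f>0$ a.e.) gives the claim.

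The case $p=1$ is handled separately, but is even simpler: monotonicity of $F$ gives $F(s)\le F(t)=\int_{0}^{t} f$ for every $s\in[0,t]$, so $\int_{0}^{t} F(s)\,ds \le t\int_{0}^{t} f(u)\,du$, which is precisely the required bound with $p=1$. I do not foresee any real obstacle: the only point requiring a moment of care is the transition from the Hölder estimate on $[0,s]$ to a uniform bound that can be pulled out of the outer integral in $s$, and this is immediate from the positivity of $f$.
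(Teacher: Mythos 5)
Your proof is correct. It rests on the same core inequality as the paper's --- H\"older (equivalently, contractivity of the averaging operator) applied to $f$ against the constant $1$ --- but you deploy it differently. The paper bounds the numerator crudely by $tF(t)^{p}$ using only the monotonicity of $F$, and then bounds the \emph{denominator from below} by $\int_{0}^{t}(F(t)/t)^{p}\,ds=F(t)^{p}/t^{p-1}$ via the contractivity of the conditional expectation on $L^{p}([0,t])$; the quotient of these two bounds is exactly $t^{p}$. You instead apply H\"older pointwise in $s$ to get $F(s)^{p}\le s^{p-1}\int_{0}^{s}f^{p}$, enlarge the inner integral to $[0,t]$, and integrate in $s$, which produces the sharper constant $t^{p}/p$ before you discard the factor $1/p$. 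Both arguments are complete (your separate, trivial treatment of $p=1$ is fine, though the general H\"older step also degenerates correctly there with $q=\infty$), and yours has the small advantage of exhibiting the better bound $t^{p}/p$; for the application in Theorem \ref{lm: QHlemma} only the qualitative decay $O(t^{p})$ as $t\to 0$ is used, so the two are interchangeable.
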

\begin{proof}
We will apply the well-known fact that the expectation operator on $L^{p}([0,t])$ is contractive, which is easiest to see by writing it like $1\otimes 1/t$. Then we have
\[
\frac{\int_{0}^{t}F(s)^{p}\ ds}{\int_{0}^{t}f(s)^{p}\ ds}\leq \frac{tF(t)^{p}}{\int_{0}^{t}(F(t)/t)^p\ ds}=\frac{tF(t)^{p}}{F(t)^{p}/t^{p-1}}=t^{p}.
\]
\end{proof}
In the above lemma it is essential that the exponents appearing in the numerator and the denominator are the same. This can be seen by multiplying $f$ with suitable positive constants, as $F$ depends linearly on $f$.

\begin{proof}[Proof of Theorem \ref{lm: QHlemma}]
We note that the assumption about the parameterization yields that 
\[
\|D\gamma_{1}(t)\| = \|D \gamma_{2}(t)\| = 1\quad \mathrm{for\ a.e.}\ t\in [0,t_{1}].
\]
Recall that we denote the G\^ateaux derivative of a path $\gamma$ by $D\gamma$. Since $\X$ has the RNP, being a reflexive space, it follows that each reasonably parameterized path of finite quasihyperbolic length 
is differentiable almost everywhere and can be recovered from its derivative by Bochner integration. 

By using assumption (i) we observe that 
\begin{eqnarray*}
\ell_{k}\left(\frac{\gamma_{1}+\gamma_{2}}{2},t_{1},t_{2}\right)&=&\int_{t_{1}}^{t_{2}}\frac{\|D(\frac{\gamma_{1}+\gamma_{2}}{2})\|}{\|\frac{\gamma_{1}+\gamma_{2}}{2}\|}\ ds=\int_{t_{1}}^{t_{2}}\frac{\|D \gamma_{2}\|}{2\|\frac{\gamma_{1}(t_{1})+\gamma_{2}}{2}\|}\ ds\\
&\leq& \int_{t_{1}}^{t_{2}}\frac{\|D \gamma_{2}\|}{2}\ ds \leq \int_{t_{1}}^{t_{2}}\frac{\|D \gamma_{2}\|}{\|\gamma_{2}\|}\ ds=\ell_{k}(\gamma_{2},t_{1},t_{2}).
\end{eqnarray*}

Thus our task reduces to verifying that
\[
\frac{\ell_{k}(\gamma_{1},0,t_{1})+\ell_{k}(\gamma_{2},0,t_{1})}{2}\geq \ell_{k}\left(\frac{\gamma_{1}+\gamma_{2}}{2},0,t_{1}\right)+\int_{0}^{t_{1}}\frac{\delta_{\X}(\|D(\gamma_{1}-\gamma_{2})\|)}{\|\gamma_{1}\|+\|\gamma_{2}\|}\ ds.
\] 

Without loss of generality we may assume, possibly by re-defining the paths, that $\|D(\gamma_{1}-\gamma_{2})(t)\|$ is not zero in any open neighborhood of $0$.

Let us evaluate by using the convexity of the mapping $t\mapsto t^{-1}$ and the moduli of smoothness and convexity in the following manner:
\begin{eqnarray*}
& & \frac{1}{2}\bigg(\frac{\|D\gamma_{1}\|}{\|\gamma_{1}\|} + \frac{\|D \gamma_{2}\|}{\|\gamma_{2}\|}\bigg)=\frac{1}{2}\left(\frac{1}{\|\gamma_{1}\|}+\frac{1}{\|\gamma_{2}\|}\right)\\
&\geq &\frac{2}{\|\gamma_{1}\|+\|\gamma_{2}\|}
\geq \frac{\|D(\gamma_{1}+\gamma_{2})\|}{\|\gamma_{1}\|+\|\gamma_{2}\|}+\frac{2\delta_{\X}(\|D(\gamma_{1}-\gamma_{2})\|)}{\|\gamma_{1}\|+\|\gamma_{2}\|}\\
&\geq &\frac{\|D(\gamma_{1}+\gamma_{2})\|}{\|\gamma_{1}+\gamma_{2}\|\big(1+2\rho_{\X}\big(\frac{\|\gamma_{1}-\gamma_{2}\|}{2\|\gamma_{1}+\gamma_{2}\|}\big)\big)} + \frac{2\delta_{\X}(\|D(\gamma_{1}-\gamma_{2})\|)}{\|\gamma_{1}\|+\|\gamma_{2}\|}.\\
\end{eqnarray*}

We aim to verify that there exists $R>0$ such that
\begin{eqnarray*}
& & \int_{0}^{t}\frac{\|D(\gamma_{1}+\gamma_{2})\|}{\|\gamma_{1}+\gamma_{2}\|\big(1+2\rho_{\X}\big(\frac{\|\gamma_{1}-\gamma_{2}\|}{ 2\|\gamma_{1}+\gamma_{2}\|}\big)\big)}+ \frac{2\delta_{\X}(\|D(\gamma_{1}-\gamma_{2})\|)}{\|\gamma_{1}\|+\|\gamma_{2}\|}\ ds\\
&\geq & \int_{0}^{t}\frac{\|D(\gamma_{1}+\gamma_{2})\|}{\|\gamma_{1}+\gamma_{2}\|}+\frac{\delta_{\X}(\|D(\gamma_{1}-\gamma_{2})\|)}{\|\gamma_{1}\|+\|\gamma_{2}\|}\ ds\\
\end{eqnarray*}
for all $0\leq t\leq R$. Recall that $1\leq \|\gamma_{1}+\gamma_{2}\|\leq 4$ by the assumptions. 
Let us analyze the terms of the above inequality:
\begin{eqnarray*}
& &  \int_{0}^{t}\frac{\|D(\gamma_{1}+\gamma_{2})\|}{\|\gamma_{1}+\gamma_{2}\|} ds - \int_{0}^{t}\frac{\|D(\gamma_{1}+\gamma_{2})\|}{\|\gamma_{1}+\gamma_{2}\|\big(1+2\rho_{\X}\big(\frac{\|\gamma_{1}-\gamma_{2}\|}{ 2\|\gamma_{1}+\gamma_{2}\|}\big)\big)}\ ds\\
&=&  \int_{0}^{t}\frac{\|D(\gamma_{1}+\gamma_{2})\|}{\|\gamma_{1}+\gamma_{2}\|} \left(1- \frac{1}{\big(1+2\rho_{\X}\big(\frac{\|\gamma_{1}-\gamma_{2}\|}{ 2\|\gamma_{1}+\gamma_{2}\|}\big)\big)}\right)\ ds\\
&\leq& \int_{0}^{t} \left(1-\frac{1}{(1+2\rho_{\X}(\|\gamma_{1}-\gamma_{2}\| / 8))}\right) ds\leq \int_{0}^{t} 2\rho_{\X}(\|\gamma_{1}-\gamma_{2}\| / 8)\ ds\\
\end{eqnarray*}
and
\begin{eqnarray*}
\int_{0}^{t}\delta_{\X}(\|D(\gamma_{1}-\gamma_{2})\|)/ 8\ ds\leq \int_{0}^{t}\frac{\delta_{\X}(\|D(\gamma_{1}-\gamma_{2})\|)}{\|\gamma_{1}\|+\|\gamma_{2}\|}\ ds.\\
\end{eqnarray*}
To justify the existence of the claimed constant $R>0$ it suffices to check that 
\begin{equation}\label{eq: zero} 
\frac{\int_{0}^{t} 2\rho_{\X}(\|\gamma_{1}-\gamma_{2}\| / 8)\ ds}{\int_{0}^{t}\delta_{\X}(\|D(\gamma_{1}-\gamma_{2})\|)/ 8\ ds}\longrightarrow 0
\end{equation}
uniformly, regardless of the selection of paths, as $t\to 0$. 

Define  $f(s)=\|D(\gamma_{1}-\gamma_{2})(s)\|$ for a.e. $s\in [0,r]$ and put
\[
F(t)=\int_{0}^{t}f(s)\ ds\geq \|\gamma_{1}(t)-\gamma_{2}(t)\|.
\]
Recall that $\rho_{\X}(\tau)\leq K \tau^{2}$ and $\delta_{\X}(\epsilon)\geq M \epsilon^{2}$.
Then the above ratio in \eqref{eq: zero} can be evaluated from above by
\begin{equation}\label{eq: ratio2}
\frac{\int_{0}^{t}2\rho(F(s)/8)\ ds}{\int_{0}^{t}\delta_{\X}(f(s))/ 8\ ds}\leq \frac{1}{4}M^{-1}K \frac{\int_{0}^{t} F(s)^{2}\ ds}{\int_{0}^{t}f(s)^{2}\ ds}\leq \frac{1}{4}M^{-1}K t^{2}.
\end{equation}
Above we applied Lemma \ref{lm: Holder}, and we note that the right-hand side tends to $0$ as $t\to 0$, independently of the choice of $f$. Thus we have the claim.
\end{proof}

\subsection*{Acknowledgments} 
We thank J.~V\"ais\"al\"a and M.~Vuorinen for their helpful comments on this paper.

\end{document}